\documentclass[portuges,12pt,letter]{article}
\usepackage[centertags]{amsmath}
\usepackage{amsfonts}
\usepackage{newlfont}
\usepackage{amscd}
\usepackage{graphics}
\usepackage{epsfig}
\usepackage{indentfirst}
\usepackage{amsxtra}
\usepackage[latin1]{inputenc}
\usepackage{amssymb, amsmath}
\usepackage{amsthm}
\usepackage[mathscr]{eucal}

\newtheorem{thm}{Theorem}[section]

\newtheorem{obe}[thm]{Remark}

\setlength{\textwidth}{18cm} \setlength{\textheight}{22cm}
\setlength{\topmargin}{-2cm} \setlength{\oddsidemargin}{-1cm}

\title{\bf{A duality principle for non-linear elasticity}}


\author{Fabio Silva Botelho }




\begin{document}
\maketitle

\begin{abstract}
This article develops a duality principle for non-linear elasticity. The results are obtained through standard tools of convex analysis and the
Legendre transform concept.

We emphasize the dual variational formulation obtained is concave. Moreover, sufficient optimality conditions are also established. \\
\end{abstract}
\section{Introduction}
At this point we start to describe the primal formulation.

Consider $\Omega \subset \mathbb{R}^3$ an open, bounded, connected set,
which represents the reference volume of an elastic solid
under the loads $f \in L^2(\Omega;\mathbb{R}^3)$ and the boundary loads $\hat{f} \in L^2(\Gamma;\mathbb{R}^3)$, where $\Gamma$  denotes
the boundary of $\Omega$. The field of displacements resulting from the actions
of $f$  and $\hat{f}$ is denoted by $u \equiv (u_1,u_2,u_3) \in U$, where
$u_1,u_2,$ and $u_3$ denotes the displacements relating the
directions $x, y,$ and $z$ respectively, in the cartesian system
$(x,y,z)$.

Here $U$ is defined by
\begin{equation}
U=\{u=(u_1,u_2,u_3) \in W^{1,4}(\Omega;\mathbb{R}^3) \; | \;
u=(0,0,0)\equiv \mathbf{0}  \text{ on } \Gamma_0\}
\end{equation}
and $\Gamma=\Gamma_0 \cup \Gamma_1$, $\Gamma_0 \cap \Gamma_1=
\emptyset$  (for details about the Sobolev space $U$ see \cite{1}). We assume $|\Gamma_0|>0$ where $|\Gamma_0|$ denotes the Lebesgue measure of $\Gamma_0.$

The stress tensor is denoted by $\{\sigma_{ij}\}$, where
\begin{gather}
\sigma_{ij}=H_{ijkl}\left(\frac{1}{2}(u_{k,l}+u_{l,k}
+u_{m,k}u_{m,l})\right),
\end{gather}
 $$\{H_{ijkl}\}=\{\lambda \delta_{ij} \delta_{kl}+\mu(\delta_{ik}\delta_{jl}+\delta_{il}\delta_{jk})\},$$ $\{\delta_{ij}\}$ is the Kronecker delta and $\lambda,\mu>0$ are the Lam\'{e} constants (we assume they are such that $\{H_{ijkl}\}$ is a symmetric constant  positive definite forth order tensor).
 Here, $i,j,k,l \in \{1,2,3\}.$

 The boundary value form of the non-linear
elasticity model is given by
\begin{gather}\label{9.9.10.1}
\left \{
\begin{array}{ll}
 \sigma_{ij,j}+(\sigma_{mj}u_{i,m})_{,j}+f_i=0, &  \text{ in } \Omega,
 \\
  u= \mathbf{0}, & \text{ on }\Gamma_0,
\\
\sigma_{ij}n_j+\sigma_{mj}u_{i,m}n_j=\hat{f}_i, & \text{ on } \Gamma_1,
  \end{array} \right.\end{gather}
 where $\textbf{n}=(n_1,n_2,n_3)$ denotes the outward normal to the surface $\Gamma.$

The corresponding primal variational formulation is represented by
$J:U \rightarrow \mathbb{R}$, where
\begin{eqnarray}
J(u)&=&\frac{1}{2}\int_\Omega H_{ijkl}\left(\frac{1}{2}(u_{i,j}+u_{j,i}
+u_{m,i}u_{m,j})\right)\left(\frac{1}{2}(u_{k,l}+u_{l,k}+
u_{m,k}u_{m,l})\right)dx\nonumber \\ &&-\langle u,f \rangle_{L^2(\Omega;\mathbb{R}^3)}-\int_{\Gamma_1} \hat{f}_i u_i \;d\Gamma
\end{eqnarray}
where $$\langle u,f \rangle_{L^2(\Omega;\mathbb{R}^3)}=\int_\Omega f_i u_i \;dx.$$
\begin{obe} Derivatives must be always understood in the distributional
sense, whereas boundary conditions are in the sense of traces. Moreover, from now on by a regular Lipschitzian boundary $\Gamma$ of $\Omega$ we mean regularity enough so that the standard Gauss-Green formulas of integrations by parts and the well known Sobolev imbedding and trace theorems to hold.
Also, we denote by $\mathbf{0}$ the zero vector in  appropriate function spaces, the standard norm for
$L^2(\Omega)$ by $\|\cdot\|_2$ and $L^2(\Omega;\mathbb{R}^{3\times 3})$ simply by $L^2$.

About the references, we refer to \cite{10,11,2900,85} as the first articles to deal with the convex analysis approach applied to non-convex and non-linear mechanics models. Indeed, the present work complements such important original publications, since in these previous results the complementary energy is established as a perfect duality principle for the case of positive definiteness of the stress tensor (or the membrane force tensor, for plates and shells models) at a critical point.

We have relaxed such constraints, allowing to some extent, the stress tensor to not  be necessarily either positive or negative definite in $\Omega$.
Similar problems and models are addressed in \cite{120}.

Moreover, existence results for models in elasticity are addressed in \cite{903,3,4}. Finally, the standard tools of convex analysis here used may be found in
\cite{6,12,29,120}.
\end{obe}

\section{The main duality principle}

At this point, in order to clarify the notation,  we recall a tensor $M=\{M_{ijkl}\}$ is said to be positive definite,  if there exists $c_0>0$ such that $$M_{ijkl}t_{ij}t_{kl}\geq c_0\; t_{ij}t_{ij}, \forall t \in \mathbb{R}^{3\times 3},
$$ and in such a case we denote $M>\mathbf{0}$. Similarly, for appropriate tensors $M_1,M_2$ of this type, we shall denote $M_1>M_2$ if $M_1-M_2>\mathbf{0}$.

The main duality principle is summarized by the following theorem.
\begin{thm} Let $\Omega \subset \mathbb{R}^3$ be an open, bounded, connected set with a regular (Lipschitzian) boundary
denoted by $\Gamma=\Gamma_0 \cup \Gamma_1,$ where $\Gamma_0 \cap \Gamma_1=\emptyset,$ and $|\Gamma_0|>0.$

Consider the functional $(G \circ \Lambda):U \rightarrow \mathbb{R}$ defined by
$$(G \circ \Lambda)(u)=\frac{1}{2}\int_\Omega H_{ijkl} \left(\frac{u_{i,j}+u_{j,i}}{2}+\frac{u_{m,i}u_{m,j}}{2} \right)\left(\frac{u_{k,l}+u_{l,k}}{2}+\frac{u_{m,k}u_{m,l}}{2} \right)\;dx,$$
where $\Lambda:U \rightarrow Y \times Y$ is given by,
$$\Lambda u=\{\Lambda_1 u, \Lambda_2 u\},$$
$$\Lambda_1(u)=\left\{ \frac{u_{i,j}+u_{j,i}}{2}\right\},$$
$$\Lambda_2 u=\{u_{m,i}\}.$$

Here,
\begin{eqnarray} U&=&\{ u \in W^{1,4}(\Omega;\mathbb{R}^3)\;:\;
\nonumber \\ &&  u=(u_1,u_2,u_3)=(0,0,0)=\mathbf{0}, \text{ on } \Gamma_0\},
\end{eqnarray}
and
$$Y=Y^*=L^2(\Omega; \mathbb{R}^{3 \times 3})\equiv L^2.$$

Define $(F\circ \Lambda_2):U \rightarrow \mathbb{R}$ by
$$(F\circ \Lambda_2)(u)=\frac{K}{2}\langle u_{m,i}, u_{m,i}\rangle_{L^2},$$
and
$(G_K \circ \Lambda):U \rightarrow \mathbb{R}$ by
$$G_K(\Lambda u)=G(\Lambda u)+\frac{K}{2}\langle u_{m,i}, u_{m,i}\rangle_{L^2}.$$

Also, define
$$C=\{u \in U\;:\; (G_K)^{**}(\Lambda u)=G_K(\Lambda u)\},$$
where $K>0$ is a constant such that
$$M=\left\{ \frac{D_{ijkl}}{2K}-\overline{H}_{ijkl}\right\}$$ is a positive definite tensor.

Here  \begin{equation}D_{ijkl}=\left\{\begin{array}{ll}
1,& \text{ if } i=k \text{ and } j=l, \\
0,& \text{ otherwise }\end{array} \right. \end{equation}
and, in an appropriate sense,
$$\{\overline{H}_{ijkl}\}=\{H_{ijkl}\}^{-1}.$$

Moreover, for $f \in L^2(\Omega;\mathbb{R}^3)$, $\hat{f} \in L^2(\Gamma_1;\mathbb{R}^3)$, let $J:U \rightarrow \mathbb{R}$ be defined by,
$$J(u)=G(\Lambda u)-\langle u_i,f_i \rangle_{L^2(\Omega)}-\langle u_i,f_i \rangle_{L^2(\Gamma_1)}.$$

Under such hypotheses,
$$\inf_{u \in U} J(u) \geq \sup_{(Q,\tilde{\sigma}) \in A^*} \tilde{J}^*(Q , \tilde{\sigma}),$$
where,
$$\tilde{J}^*(Q,\tilde{\sigma})=\inf_{z^* \in B^*(Q,\tilde{\sigma})} J^*(Q,\tilde{\sigma},z^*),$$

$$J^*(Q,\tilde{\sigma},z^*)=F^*(z^*)-G_K^*(Q,\tilde{\sigma},z^*),$$

\begin{eqnarray}F^*(z^*)&=&\sup_{v_2 \in Y} \{\langle (z^*)_{ij},(v_2)_{ij}\rangle_{L^2}-F(\{(v_2)_{ij}\})
\nonumber \\ &=& \frac{1}{2K}\int_\Omega z^*_{ij} z^*_{ij}\;dx,\end{eqnarray}

\begin{eqnarray}
G_K^*(Q,\tilde{\sigma},z^*)&=& \sup_{ (v_1,v_2) \in Y \times Y}\{\langle z^*_{ij},(v_1)_{ij} \rangle_{L^2}+
\langle \tilde{\sigma}_{ij},(v_1)_{ij}\rangle_{L^2} \nonumber \\ &&+\langle Q_{mi}, (v_2)_{mi} \rangle_{L^2}
-G_K(\{(v_1)_{ij}\},\{(v_2)_{ij}\})\} \nonumber \\ &=&
\sup_{(v_1,v_2) \in Y \times Y} \{\langle z^*_{ij},(v_1)_{ij} \rangle_{L^2}+
\langle \tilde{\sigma}_{ij},(v_1)_{ij}\rangle_{L^2} \nonumber \\ &&+\langle Q_{mi}, (v_2)_{mi} \rangle_{L^2}
 \nonumber \\ &&-\frac{1}{2}\int_\Omega H_{ijkl} \left((v_1)_{ij}+\frac{(v_2)_{mi} (v_2)_{mj}}{2} \right)
 \nonumber \\ && \times \left( (v_1)_{kl}+\frac{(v_2)_{mk}(v_2)_{ml}}{2} \right)\;dx \nonumber \\ &&- \frac{K}{2}\langle (v_2)_{mi}, (v_2)_{mi}\rangle_{L^2}\}
 \nonumber \\ &=& \frac{1}{2}\int_\Omega \overline{z^*_{ij}+\tilde{\sigma}_{ij}+K \delta_{ij}}\;Q_{mi}Q_{mj}\;dx \nonumber \\ &&
 +\frac{1}{2}\int_\Omega \overline{H}_{ijkl} (\tilde{\sigma}_{ij}+z^*_{ij})(\tilde{\sigma}_{kl} +z^*_{kl})\;dx
 \end{eqnarray}
 if $\{z^*_{ij}+\tilde{\sigma}_{ij}+K \delta_{ij}\}$ is positive definite, where
 $$\{z^*_{ij}+\tilde{\sigma}_{ij}+K\delta_{ij}\}=\left[\begin{array}{lcr}
z_{11}^*+\tilde{\sigma}_{11}+K & z_{12}^*+\tilde{\sigma}_{12} & z_{13}^*+\tilde{\sigma}_{13} \\
z_{21}^*+\tilde{\sigma}_{21} & z_{22}^*+\tilde{\sigma}_{22} +K& z_{23}^*+\tilde{\sigma}_{23} \\
z_{31}^*+\tilde{\sigma}_{31} & z_{32}^*+\tilde{\sigma}_{32} & z_{33}^*+\tilde{\sigma}_{33}+K,
\end{array} \right] $$
$$\overline{\{z^*_{ij}+\tilde{\sigma}_{ij}+K \delta_{ij}\}}=\{z_{ij}^*+\tilde{\sigma}_{ij}+K \delta_{ij}\}^{-1}.$$

Also, $B^*(\tilde{\sigma})=B_1(\tilde{\sigma})  \cap B_2$,
\begin{eqnarray}B_1(\tilde{\sigma})&=&\{z^* \in Y^*\;:\;
\nonumber \\ && \{z_{ij}^*+\tilde{\sigma}_{ij}+K \delta_{ij}\} \geq \{K\delta_{ij}/2\}, \text{ in } \Omega\},\end{eqnarray}
$$B_2=\{z^* \in Y^*\;:\; |z^*_{ij}|<K/8,\;  \text{ in } \Omega,\;\forall i,j \in \{1,2,3\}\},$$
and
$$C_1=\{u \in U\;:\; |u_{i,j}|< 1/8, \text{ in } \Omega,\; \forall i,j \in \{1,2,3\}\}.$$

Moreover,

$A^*=A_1 \cap A_2 \cap A_3 \cap A_4,$ where
\begin{eqnarray}
A_1&=&\{(Q,\tilde{\sigma}) \in Y^* \times Y^* \;:\; \nonumber \\ &&
\sup_{z^* \in B^*(\tilde{\sigma})}\{ \langle z^*_{ij},u_{i,j} \rangle_{L^2}-F^*(z^*)\}=(F \circ \Lambda_2)(u),
\nonumber \\ && \forall u \in C_1 \cap C\}.
\end{eqnarray}
\begin{eqnarray}
A_2&=&\{(Q,\tilde{\sigma}_{ij}) \in Y^* \times Y^*\;:\;
\nonumber \\ && \tilde{\sigma}_{ij,j}+Q_{ij,j}+f_i=0, \; \text{ in } \Omega\}
\end{eqnarray}
\begin{eqnarray}
A_3&=&\{(Q,\tilde{\sigma}_{ij}) \in Y^* \times Y^*\;:\;
\nonumber \\ && \tilde{\sigma}_{ij}n_j+Q_{ij}n_j-\hat{f}_i=0, \; \text{ on } \Gamma_1\},
\end{eqnarray}
\begin{eqnarray}A_4&=&\left\{(Q,\tilde{\sigma}) \in Y^* \times Y^*\;:\;
 |\tilde{\sigma}_{ij}|< K/8,\;|Q_{ij}|< \frac{3K}{32}, \text{ in } \Omega,\; \forall i,j \in \{1,2,3\}\right\}.\end{eqnarray}

Finally, suppose there exists $(u_0,(Q_0,\tilde{\sigma}_0),z_0^*) \in (C \cap C_1) \times A^* \times B^*( \tilde{\sigma}_0),$ such that
\begin{eqnarray}&&\delta\{J^*(Q_0,\tilde{\sigma}_0,z_0^*) \nonumber \\ &&
+\langle (u_0)_{i,j},(\tilde{\sigma}_0)_{ij}+(Q_0)_{ij} \rangle_{L^2}-\langle (u_0)_i,f_i \rangle_{L^2}
\nonumber \\ && -\langle (u_0)_i, f_i \rangle_{L^2(\Gamma_1)}\}=\mathbf{0}.
\end{eqnarray}

Under such hypotheses,
\begin{eqnarray}
J(u_0)&=& \min_{u \in C \cap C_1} J(u) \nonumber \\ &=& \max_{(Q, \tilde{\sigma}) \in A^*} \tilde{J}(Q, \tilde{\sigma}) \nonumber \\
&=& \tilde{J}^*(Q_0,\tilde{\sigma}_0) \nonumber \\ &=& J^*(Q_0,\tilde{\sigma}_0,z_0^*).
\end{eqnarray}
\end{thm}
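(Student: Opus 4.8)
The plan is to establish the duality as a chain of inequalities/equalities built from the Legendre–Fenchel transform, then upgrade the inequality to equality at the distinguished point $(u_0,(Q_0,\tilde\sigma_0),z_0^*)$ via the stated variational (critical point) condition. First I would write $J(u) = (F\circ\Lambda_2)(u) + (G_K\circ\Lambda)(u) - (F\circ\Lambda_2)(u) - \langle u_i,f_i\rangle_{L^2(\Omega)} - \langle u_i,f_i\rangle_{L^2(\Gamma_1)}$, i.e. add and subtract the regularizing term $\tfrac{K}{2}\langle u_{m,i},u_{m,i}\rangle_{L^2}$ so that the troublesome non-convex term $G$ is replaced by $G_K$, which on the set $C$ coincides with its bipolar $(G_K)^{**}$. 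For $u\in C\cap C_1$ I would then invoke $(G_K)(\Lambda u) = (G_K)^{**}(\Lambda u) = \sup_{(v_1^*,v_2^*)}\{\langle v_1^*,\Lambda_1 u\rangle + \langle v_2^*,\Lambda_2 u\rangle - G_K^*(v_1^*,v_2^*)\}$ and, dually, $-(F\circ\Lambda_2)(u) = -\sup_{z^*}\{\langle z^*,\Lambda_2 u\rangle - F^*(z^*)\} = \inf_{z^*}\{F^*(z^*) - \langle z^*,\Lambda_2 u\rangle\}$. Relabelling the dual variables as $v_1^* = \tilde\sigma$, $v_2^* = Q + z^*$ (so the $\Lambda_2 u = \{u_{m,i}\}$ terms are split into the part paired with $Q$ and the part paired with $z^*$ coming from $F$), and explicitly computing $F^*$ and $G_K^*$ as in the statement (the $G_K^*$ computation being the pointwise Legendre transform of a quadratic-plus-quartic density, which produces the inverse tensor $\overline{\{z^*+\tilde\sigma+K\delta\}}$ contracted with $Q_{mi}Q_{mj}$ together with $\tfrac12\overline H_{ijkl}(\tilde\sigma+z^*)_{ij}(\tilde\sigma+z^*)_{kl}$, valid exactly when $\{z^*+\tilde\sigma+K\delta\}$ is positive definite).

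Next, with these representations in hand, the standard min–max manipulation gives, for each $u\in C\cap C_1$,
\begin{eqnarray}
J(u) &=& \sup_{z^*}\ \sup_{(Q,\tilde\sigma)}\ \Big\{ F^*(z^*) - G_K^*(Q,\tilde\sigma,z^*) \nonumber \\
&& +\ \langle u_{i,j},\tilde\sigma_{ij}+Q_{ij}\rangle_{L^2} - \langle u_i,f_i\rangle_{L^2(\Omega)} - \langle u_i,f_i\rangle_{L^2(\Gamma_1)} \Big\} \nonumber
\end{eqnarray}
up to the restriction of the dual variables to the feasible sets; then integrating by parts in the term $\langle u_{i,j},\tilde\sigma_{ij}+Q_{ij}\rangle_{L^2}$ and using the constraints $A_2$ (equilibrium in $\Omega$) and $A_3$ (natural boundary condition on $\Gamma_1$), together with $u=\mathbf 0$ on $\Gamma_0$, the linear-in-$u$ terms cancel, leaving $J(u)\geq F^*(z^*)-G_K^*(Q,\tilde\sigma,z^*)=J^*(Q,\tilde\sigma,z^*)$ for every feasible triple; taking infimum over $u\in C\cap C_1$ on the left and then supremum over $(Q,\tilde\sigma)\in A^*$ and infimum over $z^*\in B^*(\tilde\sigma)$ on the right yields the weak duality inequality $\inf_u J(u)\geq\sup_{A^*}\tilde J^*$. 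The roles of the sets $A_1,A_4,B_1,B_2,C_1$ are precisely to keep all Legendre transforms in the region where the explicit formulas (and the positive-definiteness conditions, guaranteed by the choice of $K$ making $M$ positive definite) are valid, and to make the relevant pairing well defined; these membership verifications are the bookkeeping one has to be careful with but are not conceptually deep.

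Finally, for the equality at $(u_0,(Q_0,\tilde\sigma_0),z_0^*)$ I would argue that the hypothesized vanishing first variation of
$$ J^*(Q_0,\tilde\sigma_0,z_0^*) + \langle (u_0)_{i,j},(\tilde\sigma_0)_{ij}+(Q_0)_{ij}\rangle_{L^2} - \langle (u_0)_i,f_i\rangle_{L^2} - \langle (u_0)_i,f_i\rangle_{L^2(\Gamma_1)} $$
with respect to $z^*$, $(Q,\tilde\sigma)$ and $u$ is exactly the system of Euler–Lagrange / extremality relations that forces the Legendre inequalities used above to hold with equality at this point: stationarity in $z^*$ gives $\Lambda_2 u_0$ realized as the derivative of $F^*$ at $z_0^*$, i.e. $(z_0^*)_{ij}=K(u_0)_{m,i}$-type relation; stationarity in $(Q,\tilde\sigma)$ gives $\Lambda u_0 = \partial G_K^*(Q_0,\tilde\sigma_0,z_0^*)$, hence $G_K(\Lambda u_0)+G_K^*(Q_0,\tilde\sigma_0,z_0^*)=\langle \Lambda u_0,(Q_0+z_0^*,\tilde\sigma_0)\rangle$ with no gap; and stationarity in $u$ recovers the equilibrium equations so that $u_0$ satisfies (\ref{9.9.10.1}) and is feasible. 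Combining these equalities with the chain above collapses every inequality, giving $J(u_0)=J^*(Q_0,\tilde\sigma_0,z_0^*)=\tilde J^*(Q_0,\tilde\sigma_0)=\max_{A^*}\tilde J$ and $J(u_0)=\min_{C\cap C_1}J$. I expect the main obstacle to be rigorously justifying that the non-convex primal functional $G$ may be replaced by $G_K$ on $C\cap C_1$ together with checking that the explicit Legendre formula for $G_K^*$ genuinely holds on the constraint sets — in particular verifying the positive-definiteness of $\{z^*+\tilde\sigma+K\delta\}$ throughout $B^*(\tilde\sigma)$ and that the pointwise sup defining $G_K^*$ is attained — since all the analytic content of the "relaxed" positivity hypothesis is concentrated there; the integration-by-parts and the final collapsing of inequalities are then routine.
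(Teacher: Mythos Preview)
Your overall architecture matches the paper's: Fenchel inequality for $G_K^*$, integration by parts against the equilibrium constraints $A_2,A_3$, use of $A_1$ to recover $(F\circ\Lambda_2)(u)$, and then extremality relations from the first variation to collapse the inequalities. Two points, one minor and one substantive.

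\medskip
\textbf{Minor.} Your relabelling is off: in the paper's $G_K^*$ the variable $z^*$ is paired with $(v_1)_{ij}$ (the $\Lambda_1$-slot), so the correct split is $v_1^* = \tilde\sigma + z^*$, $v_2^* = Q$, not $v_2^* = Q+z^*$. This does not damage the logic, but it would make your derived extremality relations look wrong if carried through literally.

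\medskip
\textbf{Substantive gap.} The step $J^*(Q_0,\tilde\sigma_0,z_0^*)=\tilde J^*(Q_0,\tilde\sigma_0)$ does \emph{not} follow merely from stationarity in $z^*$. Recall $\tilde J^*(Q_0,\tilde\sigma_0)=\inf_{z^*\in B^*(\tilde\sigma_0)}J^*(Q_0,\tilde\sigma_0,z^*)$, so $\tilde J^*\le J^*(Q_0,\tilde\sigma_0,z_0^*)=J(u_0)$ is automatic, but equality requires that $z_0^*$ actually realize the infimum. A critical point need not be a minimizer unless you know convexity. The paper closes this by computing the second Fr\'echet derivative of $J^*$ in $z^*$ at fixed $(Q_0,\tilde\sigma_0)$ and bounding it below by the tensor $M=\{D_{ijkl}/(2K)-\overline H_{ijkl}\}$; this computation uses all three quantitative constraints simultaneously: $B_1(\tilde\sigma_0)$ gives $\{z^*+\tilde\sigma_0+K\delta\}\ge K\delta/2$ so that the inverse-cubed term is bounded, $A_4$ gives $|(Q_0)_{ij}|<3K/32$ so that the $Q_{mi}Q_{mj}$ contribution is controlled, and the hypothesis $M>\mathbf 0$ then yields strict convexity of $J^*(\,\cdot\,,\,\cdot\,,z^*)$ in $z^*$ on the convex set $B^*(\tilde\sigma_0)$. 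You correctly sensed that ``all the analytic content of the relaxed positivity hypothesis is concentrated there,'' but you located it in the well-definedness of $G_K^*$; in fact its decisive use is this second-order test, without which the chain of equalities does not close.
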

\begin{proof}
Observe that
\begin{eqnarray}
G_K^*(\tilde{\sigma},Q,z^*) &\geq& \langle \tilde{\sigma}_{ij}+z^*_{ij},u_{i,j} \rangle_{L^2} \nonumber \\ &&
+\langle Q_{mi},u_{m,i} \rangle_{L^2} -G_K(\Lambda u), \end{eqnarray}
$\forall (Q,\tilde{\sigma}) \in A^*,$ $z^* \in B^*(\tilde{\sigma}),\; u \in C \cap C_1.$

Hence,
\begin{eqnarray}
G_K^*(\tilde{\sigma},Q,z^*) &\geq& \langle z^*_{ij},u_{i,j} \rangle_{L^2} \nonumber \\ &&
-\langle \tilde{\sigma}_{ij,j}+Q_{ij,j},u_i \rangle_{L^2} \nonumber \\ &&+\langle \tilde{\sigma}_{ij}n_j+Q_{ij}n_j,u_i \rangle_{L^2(\Gamma_1)} -G_K(\Lambda u)
\nonumber \\ &=& \langle z^*_{ij},u_{i,j} \rangle_{L^2}+\langle u_i,f_i \rangle_{L^2} \nonumber \\ && +\langle u_i,\hat{f}_i \rangle_{L^2(\Gamma_1)}
-G_K(\Lambda u), \end{eqnarray}
$\forall (Q,\tilde{\sigma}) \in A^*,$ $z^* \in B^*(\tilde{\sigma}),\; u \in C \cap C_1,$
so that
\begin{eqnarray}&& -F^*(z^*)+G_K^*(\tilde{\sigma},Q,z^*) \nonumber \\ &\geq& -F^*(z^*)+ \langle z^*_{ij},u_{i,j} \rangle_{L^2}+\langle u_i,f_i \rangle_{L^2} \nonumber \\ && +\langle u_i,\hat{f}_i \rangle_{L^2(\Gamma_1)}
-G_K(\Lambda u), \end{eqnarray}
$\forall (Q,\tilde{\sigma}) \in A^*,$ $z^* \in B^*(\tilde{\sigma}),\; u \in C \cap C_1.$
Therefore,
\begin{eqnarray}&& \sup_{z^* \in B^*(\tilde{\sigma})}\{-F^*(z^*)+G_K^*(\tilde{\sigma},Q,z^*)\} \nonumber \\ &\geq& \sup_{z^* \in B^*(\tilde{\sigma})}
\{-F^*(z^*)+ \langle z^*_{ij},u_{i,j} \rangle_{L^2}+\langle u_i,f_i \rangle_{L^2} \nonumber \\ && +\langle u_i,\hat{f}_i \rangle_{L^2(\Gamma_1)}
-G_K(\Lambda u)\} \nonumber \\ &=& (F \circ \Lambda_2)(u)-G_K(\Lambda u)\nonumber \\ &&+\langle u_i,f_i \rangle_{L^2} \nonumber \\ && +\langle u_i,\hat{f}_i \rangle_{L^2(\Gamma_1)} \nonumber \\ &=&- J(u), \end{eqnarray}
$\forall (Q,\tilde{\sigma}) \in A^*,$ $u \in C \cap C_1.$

Thus,
\begin{eqnarray} J(u) &\geq& \inf_{z^* \in B^*(\tilde{\sigma})}\{F^*(z^*)-G_K^*(Q,\tilde{\sigma},z^*)\}
\nonumber \\ &=& \inf_{z^* \in B^*(\tilde{\sigma})} J^*(Q,\tilde{\sigma},z^*) \nonumber \\ &=& \tilde{J}^*(Q , \tilde{\sigma}),\end{eqnarray}
$\forall (Q,\tilde{\sigma}) \in A^*,$ $u \in C \cap C_1$,
so that
\begin{equation}\label{el1} \inf_{u \in C \cap C_1} J(u) \geq \sup_{(Q,\tilde{\sigma}) \in A^*} \tilde{J}^*(Q,\tilde{\sigma}).\end{equation}

On the other hand,
$(u_0,(Q_0,\tilde{\sigma}_0),z_0^*) \in (C \cap C_1) \times A^* \times B^*( \tilde{\sigma}_0)$ is such that
\begin{eqnarray}&&\delta\{F^*(z_0^*)-G_K^*(Q_0,\tilde{\sigma}_0,z_0^*) \nonumber \\ &&
+\left\langle (u_0)_{i,j},(\tilde{\sigma}_0)_{ij}+(Q_0)_{ij} \right\rangle_{L^2}-\langle (u_0)_i,f_i \rangle_{L^2}
\nonumber \\ && -\langle (u_0)_i, f_i \rangle_{L^2(\Gamma_1)}\}=\mathbf{0}.
\end{eqnarray}
From the variation in $Q_{m_i}$ we obtain,
\begin{equation}\label{el23}
\frac{\partial G_K^*(Q_0,\tilde{\sigma}_0,z_0^*)}{\partial Q_{mi}}-(u_0)_{m,i}=0, \text{ in } \Omega,\end{equation}
so that,
\begin{equation} (u_0)_{m,i}=\overline{(z_0^*)_{ij}+(\tilde{\sigma}_0)_{ij}+K \delta_{ij}}(Q_0)_{mj}, \text{ in } \Omega, \end{equation}
and thus,
\begin{equation}\label{el3}
(Q_0)_{mi}=((z_0^*)_{ij}+(\tilde{\sigma}_0)_{ij})(u_0)_{m,j}+K(u_0)_{m,i}. \end{equation}

From the variation in $z^*_{mi}$, we obtain
\begin{equation}\label{el4}
\frac{\partial F^*(z^*_0)}{\partial z^*_{mi}}=\frac{\partial G_K^*(Q_0,\tilde{\sigma}_0,z_0^*)}{\partial z^*_{mi}}, \end{equation}
so that
\begin{eqnarray}\label{el5}
\frac{(z_0^*)_{ij}}{K}&=&- \frac{1}{2}\;(\overline{\overline{(z_0^*)_{ij}+(\tilde{\sigma}_0)_{ij}+K \delta_{ij}}})\;Q_{mi}Q_{mj}
+\overline{H_{ijkl}}((z^*_0)_{kl}+(\tilde{\sigma}_0)_{kl})
\nonumber \\ &=&-\frac{1}{2}(u_0)_{m,i}(u_0)_{m,j}+\overline{H_{ijkl}}((z^*_0)_{kl}+(\tilde{\sigma}_0)_{kl}),
\end{eqnarray}
where
$$\{\overline{\overline{(z_0^*)_{ij}+(\tilde{\sigma}_0)_{ij}+K \delta_{ij}}}\}=\{(z_0^*)_{ij}+(\tilde{\sigma}_0)_{ij}+K \delta_{ij}\}^{-2}.$$

From (\ref{el23}), (\ref{el4}) and the variation in $\tilde{\sigma}$, we obtain
\begin{eqnarray}\label{el6}
(u_0)_{i,j}&=&\frac{\partial G_K^*(Q_0, \tilde{\sigma}_0, z_0^*)}{\partial \tilde{\sigma}_{ij}}\nonumber \\ &=&\frac{\partial G_K^*(Q_0, \tilde{\sigma}_0, z_0^*)}{\partial z^*_{ij}}
\nonumber \\ &=&-\frac{1}{2}\;\overline{\overline{(z_0^*)_{ij}+(\tilde{\sigma}_0)_{ij}+K \delta_{ij}}})\;Q_{mi}Q_{mj}
+\overline{H_{ijkl}}((z^*_0)_{kl}+(\tilde{\sigma}_0)_{kl}) \nonumber \\ &=&-\frac{1}{2}(u_0)_{m,i}(u_0)_{m,j}+\overline{H_{ijkl}}((z^*_0)_{kl}+(\tilde{\sigma}_0)_{kl}),
\end{eqnarray}
so that, from this and (\ref{el5}), we have
\begin{equation}\label{el7} (z_0^*)_{ij}=K (u_0)_{i,j}, \end{equation}
Indeed from the concerning symmetries
\begin{eqnarray}\label{el8}
&&\frac{\partial G_K^*(Q_0,\tilde{\sigma}_0,z_0^*)}{\partial [(\tilde{\sigma}_{ij}+\tilde{\sigma}_{ji})/2]}
\nonumber \\ &=& -\frac{(u_0)_{m,i}(u_0)_{m,j}}{2}+\overline{H}_{ijkl}((z_0^*)_{kl}+(\tilde{\sigma}_0)_{kl}) \nonumber \\ &=&
\frac{(u_0)_{i,j}+(u_0)_{j,i}}{2}, \text{ in } \Omega. \end{eqnarray}
Thus,
\begin{eqnarray}\label{el9}
&&(\tilde{\sigma}_0)_{ij}+(z_0^*)_{ij}\nonumber \\ &=& H_{ijkl}\left(\frac{(u_0)_{k,l}+(u_0)_{l,k}}{2}+\frac{(u_0)_{m,k}(u_0)_{m,l}}{2} \right),
\text{ in } \Omega.
\end{eqnarray}

From this and (\ref{el7}) we may obtain
\begin{equation}\label{el10}
(\tilde{\sigma}_0)_{ij}= \sigma_{ij}(u_0)-K (u_0)_{i,j}
\end{equation}
where
\begin{eqnarray}\label{el11}\sigma_{ij}(u_0)&=& H_{ijkl}\left(\frac{(u_0)_{k,l}+(u_0)_{l,k}}{2}+\frac{(u_0)_{m,k}(u_0)_{m,l}}{2}\right)
\nonumber \\ &=& (\tilde{\sigma}_0)_{ij}+(z_0^*)_{ij}.
\end{eqnarray}
From this and (\ref{el3}) we have,
\begin{equation}\label{el12}
(Q_0)_{mi}=\sigma_{ij}(u_0)(u_0)_{m,j}+K (u_0)_{m,i}.
\end{equation}
From the variation in $u_0$, we obtain,
\begin{equation}\label{el13}
(\tilde{\sigma}_0)_{ij,j}+(Q_0)_{ij,j}+f_i=0, \text{ in } \Omega,
\end{equation}
and
\begin{equation}\label{el14}
(\tilde{\sigma}_0)_{ij}n_j +(Q_0)_{ij}n_j-\hat{f}_i=0, \text{ on } \Gamma_1.
\end{equation}
By (\ref{el10}), (\ref{el11}), (\ref{el12}), (\ref{el13}) and (\ref{el14}), we obtain
\begin{equation}\label{el15}
(\sigma_{ij} (u_0))_{,j}+(\sigma_{im}(u_0)(u_0)_{m,j})_{,j}+f_i=0, \text{ in } \Omega,
\end{equation}
and
\begin{equation}\label{el16}
\sigma_{ij}(u_0)n_j +\sigma_{im}(u_0)(u_0)_{m,j} n_j-\hat{f}_i=0, \text{ on } \Gamma_1.
\end{equation}
where $\sigma(u_0)$ is indicated in (\ref{el11}).

Also, from (\ref{el7})
\begin{equation}\label{el17} F^*(z_0^*)=\langle (z_0^*)_{ij}, (u_0)_{ij} \rangle_{L^2}-(F\circ \Lambda_2)(u_0). \end{equation}
From (\ref{el10})-(\ref{el14}), we may write,
\begin{eqnarray}\label{el18}
G_K^*(Q_0, \tilde{\sigma}_0,z_0^*)&=& \langle (Q_0)_{mi},(u_0)_{m,i} \rangle_{L^2}+\langle (z_0^*)_{ij},(u_0)_{i,j} \rangle_{L^2} \nonumber \\
&& +\langle (\tilde{\sigma}_0)_{ij},(u_0)_{i,j} \rangle_{L^2}-G_K(\Lambda u_0) \nonumber \\ &=&
\langle (z_0^*)_{ij},(u_0)_{i,j} \rangle_{L^2}+ \langle (u_0)_i,f_i \rangle_{L^2} \nonumber \\ && +\langle (u_0)_i, \hat{f}_i \rangle_{L^2(\Gamma_1)}
-G_K(\Lambda u_0).
\end{eqnarray}

By (\ref{el17}) and (\ref{el18}), we obtain,
\begin{eqnarray}\label{el19}
&&J^*(Q_0, \tilde{\sigma}_0, z_0^*) \nonumber \\ &=& F^*(z_0^*)-G_K^*(Q_0, \tilde{\sigma}_0,z_0^*) \nonumber \\ &=&
G_K(\Lambda u_0)-(F \circ \Lambda_2)(u_0) \nonumber \\ &&-\langle (u_0)_i,f_i \rangle_{L^2} -\langle (u_0)_i, \hat{f}_i \rangle_{L^2(\Gamma_1)}
\nonumber \\ &=& J(u_0)
\end{eqnarray}
Denoting $$\{\overline{\overline{\overline{(z^*)_{ij}+(\tilde{\sigma}_0)_{ij}+K \delta_{ij}}}}\}=\{(z^*)_{ij}+(\tilde{\sigma}_0)_{ij}+K \delta_{ij}\}^{-3}$$
and the second Fr\'{e}chet derivative of $J^*(Q,\tilde{\sigma},z^*)$ relating $z^*$ (here considering $z^*$ as an independent variable with fixed $Q$ and $\tilde{\sigma})$ at $(Q_0,\tilde{\sigma}_0,z^*)$ by
 $$\delta^2_{z^*z^*} J^*(Q_0,\tilde{\sigma}_0,z^*)$$ and also denoting \begin{gather}
(M_3)_{ijkl}=\left \{
\begin{array}{ll}
W_{ij}, &  \text{ if } i=k \text{ and } j=l,
 \\
 0, & \text{ otherwise, }
  \end{array} \right.\end{gather}
where $$W_{ij}=(\overline{\overline{\overline{(z^*)_{ij}+(\tilde{\sigma}_0)_{ij}+K \delta_{ij}}}})\;\;(Q_0)_{mi}(Q_0)_{mj} \text{(here not summing)},\; \forall i,j \in \{1,2,3\},$$
 we have
\begin{eqnarray}
&& \delta^2_{z^*z^*} J^*(Q_0,\tilde{\sigma}_0,z^*) \nonumber \\ &=&
\{ D_{ijkl}/K-(M_3)_{ijkl}-\overline{H}_{ijkl}\}
\nonumber \\ &\geq& \left\{ D_{ijkl}/K-\frac{(3/32K)^2 D_{ijkl}}{(K/2)^3}-\overline{H}_{ijkl}\right\} \nonumber \\ &\geq&
\{D_{ijkl}/(2K)-\overline{H}_{ijkl}\} \nonumber \\ &>& \mathbf{0},
\end{eqnarray}
$ \text{ in } \Omega, \;\forall z^* \in B^*(\tilde{\sigma}_0).$

From this, since $B^*(\tilde{\sigma}_0)$ is convex we obtain,
\begin{equation}\label{el20}\tilde{J}^*(Q_0,\tilde{\sigma}_0)=\inf_{ z^* \in B^*(\tilde{\sigma}_0)} J^*(Q_0,\tilde{\sigma}_0,z^*)= J^*(Q_0,\tilde{\sigma}_0,z^*_0).\end{equation}

Finally, from this, (\ref{el19}) and (\ref{el1}), we have,
\begin{eqnarray}
J(u_0)&=& \min_{u \in C \cap C_1} J(u) \nonumber \\ &=& \max_{(Q, \tilde{\sigma}) \in A^*} \tilde{J}^*(Q, \tilde{\sigma}) \nonumber \\
&=& \tilde{J}^*(Q_0,\tilde{\sigma}_0) \nonumber \\ &=& J^*(Q_0,\tilde{\sigma}_0,z_0^*).
\end{eqnarray}
The proof is complete.

\end{proof}

\section{Conclusion} In the present work, we have developed a duality principle  for non-linear elasticity.
We emphasize again the dual formulation obtained in concave and suitable for numerical computations.
Finally, we also highlight the results developed are applicable to other models in elasticity and other models of plates and shells.


\begin{thebibliography}{}
%
%
\bibitem{1} R.A. Adams and J.F. Fournier, Sobolev Spaces, 2nd edn. Elsevier, New York, 2003.

\bibitem{2900} W.R. Bielski, A. Galka, J.J. Telega, The Complementary Energy Principle and Duality for
Geometrically Nonlinear Elastic Shells. I. Simple case of moderate rotations around a tangent to the middle surface.
Bulletin of the Polish Academy of Sciences, Technical sciences, Vol. 38, No. 7-9, 1988.
\bibitem{85} W.R. Bielski and J.J. Telega, A Contribution to Contact Problems for a Class of Solids and Structures,
Arch. Mech., 37, 4-5, pp. 303-320, Warszawa 1985.
\bibitem{120} F. Botelho, Functional Analysis and Applied Optimization in Banach Spaces,
 Springer Switzerland, 2014.


\bibitem{903} P.Ciarlet, {\it Mathematical Elasticity}, {Vol. I -- Three Dimensional Elasticity}, North Holland Elsevier (1988).

\bibitem{3} P.Ciarlet, {\it Mathematical Elasticity}, {Vol. II -- Theory of Plates}, North Holland Elsevier (1997).

\bibitem{4} P.Ciarlet, {\it Mathematical Elasticity}, {Vol. III -- Theory of Shells}, North Holland Elsevier (2000).

\bibitem{6} I. Ekeland, R. Temam, Convex Analysis and Variational Problems, North Holland, Amsterdam, 1976.
\bibitem{11} A.Galka and J.J.Telega {\it Duality and the complementary
energy principle for a class of geometrically non-linear structures.
Part I. Five parameter shell model; Part II. Anomalous dual
variational priciples for compressed elastic beams}, Arch. Mech. 47
(1995) 677-698, 699-724.




\bibitem{29} R.T. Rockafellar, Convex Analysis, Princeton University Press, Princeton, 1970.
 \bibitem{10} J.J. Telega, {\it On the complementary energy principle in non-linear elasticity.
 Part I: Von Karman plates and three dimensional solids}, C.R. Acad.
 Sci. Paris, Serie II, 308, 1193-1198; Part II: Linear elastic solid
 and non-convex boundary condition. Minimax approach, ibid, pp.
 1313-1317 (1989)


\bibitem{12} J.F. Toland, {\it A duality principle for non-convex
optimisation and the calculus of variations}, Arch. Rath. Mech.
Anal., {\bf 71}, No. 1 (1979), 41-61.


\end{thebibliography}
\end{document}